\newtheorem{theorem}[equation]{Theorem}
\newtheorem{corollary}[equation]{Corollary}
\newtheorem{lemma}[equation]{Lemma}
\newtheorem{proposition}[equation]{Proposition}
\theoremstyle{definition}
\theoremstyle{definition}
\theoremstyle{remark}
\theoremstyle{remark}
\newtheorem*{note}{Remark}
\theoremstyle{definition}
\numberwithin{equation}{section}
\DeclareMathOperator{\re}{Re}
\newcommand{\e}{\text{\bf E}}
\newcommand{\p}{\text{\bf P}}
\newcommand{\one}{\mathbf 1}
\newcommand{\fp}{\mathfrak{p}}
\newcommand{\fP}{\mathfrak{P}}
\newcommand{\N}{\mathbb{N}}
\newcommand{\Z}{\mathbb{Z}}
\newcommand{\R}{\mathbb{R}}
\newcommand{\Q}{\mathbb{Q}}
\newcommand{\C}{\mathbb{C}}
\begin{document}

\title[Markov processes on the adeles]{Markov processes on the adeles and Dedekind's zeta function}
\subjclass[2010]{60J25, 60B05, 11R42, 11R56} \keywords{Markov process, semi-stable processes on local fields, adele ring, algebraic number fields, Dedekind's zeta function}
\begin{abstract}
Let $K$ be an algebraic number field. We construct an additive Markov process $X_t^{K_\mathbb A}$ on the ring of adeles $K_\mathbb A,$ whose coordinates $X_t^{(v)}$ are independent and use this process to give a probabilistic interpretation of the Dedekind zeta function $\zeta_K(s),$ for $\re s>1.$ This note extends a recent work of Yasuda \cite{Yprob} where the case of the field $K=\Q$ of rational numbers was considered.
\end{abstract}

\author[R. Urban]{Roman Urban}
\address{Institute of Mathematics\\
Wroclaw University\\
Plac Grunwaldzki 2/4\\
50-384 Wroclaw, Poland} \email{urban@math.uni.wroc.pl}
\thanks{Research supported in part by
the MNiSW research grant
N N201 393937.}

\maketitle

\section{Introduction}\label{introduction}
For definitions of number theoretical terms appearing in this Introduction see \S\ref{prel}.

Let $K$ be an algebraic number field (i.e., a finite extension of $\Q$).
Let $s=\sigma+it\in\C.$ The Dedekind's zeta function of an algebraic number field $K$ is defined by
\begin{equation}\label{ds}
\zeta_K(s)=\sum_I\frac{1}{N(I)^s},
\end{equation}
where the summation is over all non-zero ideals in $R_K,$ the ring of integers of $K,$ and $N(I)=[R_K:I].$ In the half-plane $\sigma>1$ the series \eqref{ds} converges absolutely, and the convergence is uniform in every compact subset of that half-plane. Moreover, by unique factorization
of ideals, we have the Euler product representation for $\sigma>1,$
\begin{equation}\label{ep}
\zeta_K(s)=\prod_\fp\left(1-\frac{1}{N(\fp)^s}\right)^{-1},
\end{equation}
where the product is over all the prime ideals $\fp\subset R_K.$

The Dedekind zeta function can be continued analytically to a meromorphic function having a unique simple pole at $s=1.$
For more about $\zeta_K$ see, e.g., \cite{Nar,Neu}.

The aim of this note is to show some probabilistic interpretation of the Euler product \eqref{ep} of the Dedekind zeta function $\zeta_K(s).$ This will require a construction of an appropriate Markov processes on the ring of adels of $K.$
Analysis of stochastic processes on local fields and adels attracted the attention of many authors (e.g., \cite{Karwowski_adeles,AK,Ytohoku}). The reason for this interest maybe that the ultrametric spaces seem to be more suitable for description of some physical phenomena. Thus, many authors study for example $p$-adic string theory or $p$-adic quantum mechanics
(see e.g., Khrennikov's monograph \cite{K} and the references therein). The ultrametric analysis has also found its application in biology and social science (see \cite{K1}).

Our main theorem of this paper is an application of stochastic analysis on ultrametric spaces to number theory. To state this result we need to introduce the ring of adeles of $K.$

Let $\mathcal P(K)$ ($\mathcal{P}_\mathrm{f}(K),$ resp.) denote the set of places (finite places, resp.) of $K.$ By $K_v$ we denote the completion of $K$ with respect to $v,$ and let $|\cdot|_v$ be the  normalized valuation (see \eqref{normval}). The adele ring of $K$ is defined as
\begin{equation*}
K_\mathbb A=\left\{x=(x_v)\in\prod_{v\in\mathcal P(K)}K_v:|x_v|_v\leq 1\text{ for all but finitely many $v\in\mathcal P_\mathrm{f}$}\right\}.
\end{equation*}
Thus, the ring of adeles of $K$ is a restricted direct product, i.e, the product $\prod_{v\in\mathcal P(K)}K_v$ relative to the \begin{equation*}
R_v=\{|x_v|_v\leq 1\},\;\;v\in\mathcal P_\mathrm{f}(K).
\end{equation*}

Our main result is the following theorem which extends the result of Yasuda \cite[Theorem~1]{Yprob}, where $K=\Q$ and the Riemann zeta function was considered.
\begin{theorem}\label{main}
Let $K$ be an algebraic number field and let, for every  $v\in\mathcal{P}_\mathrm{f}(K),$ $\{a_v(M)\}_{M\in\Z},$ be a sequence of real number satisfying:
\begin{itemize}
\item[(i)] $a_v(M+1)\leq a_v(M),$
\item[(ii)] $\lim_{M\to+\infty}a_v(M)=0,$
\item[(iii)] $0<\sum_{v\in\mathcal P(K)}a_v(0)<+\infty,$
\item[(iv)] $a_v(M)=c_vq^{-\alpha_vM}$
for some $c_v,\alpha_v>0.$
\end{itemize}
Then there exists an additive  Markov process $X_t^{K_\mathbb A}$ on the ring of adeles $K_\mathbb A,$ whose coordinates $X_t^{(v)}$ are independent, such that for every complex number $s$ with $\re s>1,$
\begin{equation*}
\zeta_K(s)=\e_0\left(\prod_{v\in\mathcal{P}_\mathrm{f}(K)}(1-q_v^{-\alpha_v})^{-1}|\pi_vX_{\tau_v}^{(v)}|_v^{-s+\alpha_v}\right),
\end{equation*}
where $\pi_v$ is a uniformizer, and
\begin{equation*}
\tau_v=\inf\{t\geq 0:X_t^{(v)}\not\in R_v\}
\end{equation*}
is the first exit time from $R_v.$
\end{theorem}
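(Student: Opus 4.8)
The plan is to reduce the adelic identity to a local computation at each finite place and then reassemble. Since the coordinates $X_t^{(v)}$ are independent, the expectation on the right factorizes over $v\in\mathcal P_\mathrm{f}(K)$, so comparing with the Euler product \eqref{ep} (recall $N(\fp_v)=q_v$) it suffices to prove, for each $v$ and each $s$ with $\re s>1$,
\begin{equation*}
\e_0\left(|\pi_v X_{\tau_v}^{(v)}|_v^{-s+\alpha_v}\right)=\frac{1-q_v^{-\alpha_v}}{1-q_v^{-s}},
\end{equation*}
after which multiplying by $(1-q_v^{-\alpha_v})^{-1}$ recovers the local factor $(1-q_v^{-s})^{-1}$. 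First I would realize $X_t^{(v)}$ as a translation-invariant (additive) pure-jump process on the local field $K_v$ whose radial L\'evy measure $\nu_v$ is prescribed by the sequence $\{a_v(M)\}$, with $a_v(M)$ playing the role of the rate of jumps of size at least $q_v^M$. Conditions (i)--(ii) guarantee that $\nu_v$ is a well-defined, decreasing jump intensity and that the associated Hunt process exists (following the constructions in \cite{AK,Yprob}), while (iv) makes the process semi-stable.

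The core of the argument is the exit law, where the ultrametric structure does all the work. Because $K_v$ is totally disconnected the process moves only by jumps, so exit from $R_v$ is never a boundary phenomenon. Starting at $0\in R_v$, every jump of size $\le 1$ keeps the trajectory inside $R_v$ by the ultrametric inequality, whereas the first jump $\Delta$ with $|\Delta|_v>1$ necessarily sends the process out, landing at $X_{\tau_v}^{(v)}=X_{\tau_v^-}^{(v)}+\Delta$ with $|X_{\tau_v}^{(v)}|_v=|\Delta|_v$ (again ultrametric, since $|X_{\tau_v^-}^{(v)}|_v\le 1<|\Delta|_v$). Hence $\tau_v$ is the first arrival time of the finite-rate Poisson stream of jumps of size $>1$, and the exit level $k\ge 1$, defined by $|X_{\tau_v}^{(v)}|_v=q_v^k$, is distributed as $\nu_v(\{|x|_v=q_v^k\})/\nu_v(\{|x|_v>1\})$.

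Inserting the semi-stable form (iv) turns this ratio into the geometric law $\p_0(|X_{\tau_v}^{(v)}|_v=q_v^k)=(1-q_v^{-\alpha_v})\,q_v^{-(k-1)\alpha_v}$ for $k\ge 1$. Since $|\pi_v X_{\tau_v}^{(v)}|_v=q_v^{k-1}$, the local expectation becomes
\begin{equation*}
\e_0\left(|\pi_v X_{\tau_v}^{(v)}|_v^{-s+\alpha_v}\right)=(1-q_v^{-\alpha_v})\sum_{k\ge 1}q_v^{-(k-1)s}=\frac{1-q_v^{-\alpha_v}}{1-q_v^{-s}},
\end{equation*}
the geometric series converging for $\re s>0$. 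This establishes the displayed local identity.

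Finally I would assemble $X_t^{K_{\mathbb A}}=(X_t^{(v)})_v$ as the independent product of the local processes and verify that it is a genuine Markov process on the restricted product $K_{\mathbb A}$: since the exit rate from $R_v$ is bounded by $a_v(0)$, one has $\p_0(\tau_v\le t)\le a_v(0)\,t$, and condition (iii) together with Borel--Cantelli forces $X_t^{(v)}\in R_v$ for all but finitely many $v$ almost surely, so trajectories lie in $K_{\mathbb A}$. The same summability, combined with the absolute convergence of \eqref{ep} for $\re s>1$, justifies interchanging the infinite product with the expectation and identifies the product of local factors with $\zeta_K(s)$. I expect the main obstacle to be precisely this adelic step rather than the clean single-place computation: one must rigorously control the restricted-product construction and the exchange of an infinite product with $\e_0$, which is exactly where the cross-place condition (iii) and the half-plane $\re s>1$ are indispensable.
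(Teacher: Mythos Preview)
Your proposal is correct and shares the paper's overall architecture: factor the adelic expectation over finite places by independence, compute each local factor, and match the Euler product \eqref{ep}. The difference lies in how the exit distribution $\p_0(|X_{\tau_v}^{(v)}|_v=q_v^m)$ is obtained. The paper (Lemma~\ref{mainlemma}) derives it by coupling $X_t^{(v)}$ with a continuous-time Markov chain on the set of unit balls and reading off the jump probabilities from the explicit generator formula of Lemma~\ref{genkp}, ultimately tracing back to the closed form \eqref{tk} for $P_t^{(\fp)}$. You instead exploit the L\'evy--It\^o structure directly: since $K_v$ is totally disconnected the process is pure-jump, the ultrametric inequality forces exit from $R_v$ to coincide with the first jump of norm exceeding $1$, and the exit level is then distributed as $\nu_v(\{|x|_v=q_v^k\})/\nu_v(\{|x|_v>1\})=(a_v(k-1)-a_v(k))/a_v(0)$, which under (iv) gives the same geometric law. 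Your route is shorter and avoids the explicit semigroup/generator computations; the paper's route, in exchange, keeps the link to the Albeverio--Karwowski/Yasuda transition kernels fully visible. The adelic assembly via Borel--Cantelli and condition (iii) is handled the same way in both arguments (Proposition~\ref{markovonadels}).
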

\begin{note}
The condition (iv) implies that the coordinate processes $X_t^{(v)}$ are semi-stable. We do not use this feature in this note.
For more on semi-stable process on local fields see e.g. \cite{Ytohoku}.
\end{note}
\begin{note}
We use a different construction of the Markov process $X_t^{K_\mathbb A}$ than that presented in \cite{Yprob}. As a result our proof is much shorter and seems to be easier.
\end{note}
The following functional equations for $\zeta_K$ follows easily from Theorem~\ref{main} We omit their proofs as they are identical with those given in \cite{Yprob}.
\begin{corollary}
Let $s=x+iy\in\C$ with $x>1.$ Let $\bar s$ be the complex conjugate. Then
\begin{itemize}
\item[(1)] Let $X_t^{K_\mathbb A}$ be the process corresponding to the index $\alpha_v=2x$ for all $v\in\mathcal{P}_\mathrm{f}(K).$ Then
    \begin{equation*}
    \zeta_K(s)\e_0\left(\prod_{v\in\mathcal{P}_\mathrm{f}(K)}|\pi_vX_{\tau_v}^{(v)}|_v^{ s}\right)=\zeta_K(\bar s)\e_0\left(\prod_{v\in\mathcal{P}_\mathrm{f}(K)}|\pi_vX_{\tau_v}^{(v)}|_v^{\bar s}\right).
\end{equation*}
\item[(1)] Let $X_t^{K_\mathbb A}$ be the process corresponding to the index $\alpha_v=x$ for all $v\in\mathcal{P}_\mathrm{f}(K).$ Then
    \begin{equation*}
    \zeta_K(s)=\zeta_K(x)\e_0\left(\prod_{v\in\mathcal{P}_\mathrm{f}(K)}|\pi_vX_{\tau_v}^{(v)}|_v^{-iy}\right)
\end{equation*}
and
\begin{equation*}
    \zeta_K(s)\e_0\left(\prod_{v\in\mathcal{P}_\mathrm{f}(K)}|\pi_vX_{\tau_v}^{(v)}|_v^{iy}\right)=\zeta_K(\bar s)\e_0\left(\prod_{v\in\mathcal{P}_\mathrm{f}(K)}|\pi_vX_{\tau_v}^{(v)}|_v^{-iy}\right).
\end{equation*}
\end{itemize}
\end{corollary}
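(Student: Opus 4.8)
The plan is to read off both pairs of functional equations directly from the probabilistic Euler product of Theorem~\ref{main}, using only the independence of the coordinates $X_t^{(v)}$ together with the elementary arithmetic of the exponent $-s+\alpha_v$. To set up notation I would write $\xi_v=|\pi_vX_{\tau_v}^{(v)}|_v$ and $f_v(w)=\e_0(\xi_v^{w})$ for $w\in\C$. Since the coordinate processes are independent, I would first record that the expectation of a product over $v$ factors, so that Theorem~\ref{main}, for a \emph{fixed} process (i.e.\ fixed $\alpha_v$) and \emph{all} $w$ with $\re w>1$, takes the form
\begin{equation*}
\zeta_K(w)=\prod_{v\in\mathcal{P}_\mathrm{f}(K)}(1-q_v^{-\alpha_v})^{-1}f_v(-w+\alpha_v),
\end{equation*}
and likewise $\e_0\big(\prod_v\xi_v^{w}\big)=\prod_v f_v(w)$.

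For part (1) I would take the process with $\alpha_v=2x$ for every $v$ and evaluate the displayed identity at both $s$ and $\bar s$, which is legitimate since $\re s=\re\bar s=x>1$. Because $-s+2x=\bar s$ and $-\bar s+2x=s$, this gives
\begin{equation*}
\zeta_K(s)=\prod_v(1-q_v^{-2x})^{-1}f_v(\bar s),\qquad \zeta_K(\bar s)=\prod_v(1-q_v^{-2x})^{-1}f_v(s).
\end{equation*}
Multiplying the first identity by $\e_0\big(\prod_v\xi_v^{s}\big)=\prod_v f_v(s)$ and the second by $\e_0\big(\prod_v\xi_v^{\bar s}\big)=\prod_v f_v(\bar s)$, each side collapses to the symmetric product $\prod_v(1-q_v^{-2x})^{-1}f_v(s)f_v(\bar s)$, which is exactly the equality asserted in (1).

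For part (2) I would instead take $\alpha_v=x$. Then $-s+x=-iy$ and $-\bar s+x=iy$, while the deterministic prefactor becomes $\prod_v(1-q_v^{-x})^{-1}=\zeta_K(x)$ by the Euler product \eqref{ep} (valid as $x>1$). Hence the displayed form of Theorem~\ref{main} yields
\begin{equation*}
\zeta_K(s)=\zeta_K(x)\,\e_0\Big(\prod_v\xi_v^{-iy}\Big),\qquad \zeta_K(\bar s)=\zeta_K(x)\,\e_0\Big(\prod_v\xi_v^{iy}\Big),
\end{equation*}
the first of which is already the first identity of (2). Substituting both expressions into the two sides of the second identity of (2) reduces each side to $\zeta_K(x)\,\e_0\big(\prod_v\xi_v^{iy}\big)\,\e_0\big(\prod_v\xi_v^{-iy}\big)$, which proves it.

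Once the exponent arithmetic is noted the algebra is immediate, so I expect the only genuine point to be justifying the factorization of the infinite product and its interchange with $\e_0$. This amounts to the absolute convergence of $\prod_v(1-q_v^{-\alpha_v})^{-1}f_v(-w+\alpha_v)$, which is guaranteed by hypotheses (i)--(iv) and was already established in proving Theorem~\ref{main}; no new estimate is required, and I anticipate this convergence bookkeeping to be the main (and only mild) obstacle.
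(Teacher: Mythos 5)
Your derivation is correct and is essentially the intended argument: the paper omits the proof, stating it is identical to Yasuda's in \cite{Yprob}, and that proof is exactly this reading-off from Theorem~\ref{main} via the exponent arithmetic $-s+\alpha_v=\bar s$ (for $\alpha_v=2x$) and $-s+\alpha_v=-iy$ (for $\alpha_v=x$), together with $\prod_v(1-q_v^{-x})^{-1}=\zeta_K(x)$ and factorization of $\e_0$ over the independent coordinates. The convergence bookkeeping you flag is handled at the same level of rigor as in the paper's proof of Theorem~\ref{main} itself (where independence is invoked to factor the infinite product), so no new estimate is needed beyond noting, as you do, that the relevant products converge absolutely for $x>1$.
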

\subsection*{Structure of the paper} In \S\ref{prel} we we recall some notions and elementary facts from algebraic number theory.
In particular, we define $\fp$-adic fields and, for the algebraic number field $K,$ we define the main algebraic structure of this note, the locally compact adele ring of $K.$

In \S\ref{2markov} we define an appropriate Markov process on the adeles $K_\mathbb A.$ In order to to this first we define the coordinate Markov processes on the $\fp$-adic fields.

Finally, in \S\ref{pmain} we prove Theorem~\ref{main}.
\section{Preliminaries}\label{prel}

\subsection{$\fp$-adic fields} The best references for this section are \cite{Nar,Neu}. Let $K$ be an algebraic number field (i.e., a finite extension of $\Q$). A valuation $v$ of $K$ is a homomorphism $v:K\to\R^+\cup\{0\}$ such that $v(x)=0$ if and only if $x=0,$ and and there is a real number $c\geq 1$ such that for all $x,y\in K,$ $v(xy)=v(x)v(y)$ and $v(x+y)\leq c\max\{v(x),v(y)\}.$ The absolute value $v$ is non-trivial if $v(K)\supsetneq\{0,1\}.$ The valuation $v$ is non-Archimedean if $v$ is non-trivial and we can set $c=1,$ and is said to be Archimedean otherwise.

We say that two valuation $v_1$ and $v_2$ of $K$ are equivalent if there is an $s>0$ such that $v_1(x)=v_2(x)^s$ for every $x\in K.$ An equivalence class $v$ of a non-trivial absolute value of $K$ is called a place of $K.$ A place $v$ is finite if $v$ contains a non-Archimedean absolute value, and infinite otherwise. The set of places, finite places and infinite places of $K$ is denoted by
$\mathcal P=\mathcal P(K),$ $\mathcal P_{\mathrm f}=\mathcal P_{\mathrm f}(K)$ and $\mathcal P_\infty =\mathcal P_\infty(K),$ respectively.

By Ostrovski's theorem every non-trivial valuation of $\Q$ is either equivalent to the usual absolute value $|\cdot|_\infty,$
or to the $p$-adic absolute value $|\cdot|_p$ for some rational prime $p>1,$ defined by $|0|_p=0$ and
$|p^k\frac{n}{m}|_p=p^{-k}$ for $k,n,m\in\Z$ and $p\nmid nm.$

Let $R_K$ be the ring of integers of an algebraic number field $K.$ Let $\fp$ a prime ideal of $R_K,$ $v$ the (discrete) valuation associated with $\fp$ (\cite[Theorem~3.3]{Nar}). By $K_\fp$ or $K_v$ we denote the completion of $K$ under $v,$ and we call $K_\fp$ the $\fp$-adic field. By $k$ we denote the quotient field $R_K\slash\fp,$ the residue class field. The cardinality of this residue field is a very important parameter, which we denote by $q=q_\fp=q_v.$ The extension of $v$ to $K_\fp$ will be also denoted by $v.$ The ring of integers of $K_\fp,$ $R_\fp=\{x\in K_\fp:v(x)\leq 1\}$ is the closure of the ring $R=\{x\in K:v(x)\leq 1\},$ and $\fP=\{x\in K_\fp:v(x)<1\}=\fp R_\fp$ is a prime ideal of $R_\fp,$ which is the closure of the prime ideal $\{x\in K:v(x)<1\}$ of $R.$ The invertible elements of $R_\fp$ form a group $U(R_\fp)$ of units of $K_\fp.$ The quotient fields $R_K\slash\fp$ and $R_\fp\slash\fP$ are isomorphic (\cite[Proposition~5.1]{Nar}).

We define a uniformizer for $v,$ or a local parameter, to be an element $\pi,$ also denoted by $\pi_v$ or $\pi_\fp$ of $K_\fp$ of maximal $v(\pi)$ less than $1.$ If we fix a uniformizer $\pi,$ every element of $K_\fp^*$ can be written uniquely as $x=u\pi^m$ for some $u$ with $v(u)=1$ and $m\in\Z.$ Moreover, each element $x\in K_\fp^*$ can be expressed in one and only one way as a convergent series
\begin{equation}\label{expansion}
x=\sum_{i=m}^\infty r_i\pi^i,
\end{equation}
where the coefficients $r_i$ are taken from a set $\mathcal R\subset R_\fp$ (of cardinality $q$) of representatives of the residue classes in the field
$k_\fp:=R_\fp\slash\fP$ (i.e., the canonical map $R_\fp\to k_\fp$ induces a bijection of $\mathcal R$ onto $k_\fp$).

In what follows we consider the normalized valuation
\begin{equation}\label{normval}
v(x)=q^{-k},
\end{equation}
where $k$ is the unique integer such that $x=u\pi^k$ for some unit $u.$
Let $K$ be a field with a valuation $v.$ Then $K$ is a $\fp$-adic field with the $\fp$-adic valuation if and only if $K$ is a finite extension of $\Q_p$ for a suitable $p.$ (See \cite[Theorem~5.10]{Nar}.)
\subsection{Adeles} Here we only recall a definition of the ring of adeles of $K.$ For more details see \cite{RV,W}. We write $|\cdot|_v$ for the valuation $v(\cdot).$ The set
\begin{equation*}
K_\mathbb A=\left\{x=(x_v)\in\prod_{v\in\mathcal P(K)}K_v:|x_v|_v\leq 1\text{ for all but finitely many $v\in\mathcal P_\mathrm{f}$}\right\}
\end{equation*}
furnished with the topology in which the subgroup
\begin{equation*}
\{x=(x_v)\in K_\mathbb A:|x_v|_v\leq 1\text{ for every $v\in\mathcal P_{\mathrm f}(K)$}\}\cong\prod_{v\in\mathcal P_\infty(K)}K_v\times\prod_{v\in\mathcal P_{\mathrm f}(K)}R_v,
\end{equation*}
where $R_v=\{x\in K_v:|x|_v\leq 1\},$ carries the product topology and is open in $K_\mathbb A$ is the locally compact {\em adele ring} of $K$ and its elements are called {\em adeles}.

%The Borel $\sigma$-algebra $\mathcal F_{K_\mathbb A}$ is generated by
%\begin{equation*}
%F_{K_\mathbb A}=\left\{M=\prod\right\}
%\end{equation*}

\section{Markov process on the adele ring of $K$}\label{2markov}
\subsection{Markov process on $K_\fp$}
In order to construct an appropriate process on $K_\mathbb A$ we need first to define the processes on the $\fp$-adic fields.
Rotation-invariant additive processes on $\Q_p$ were constructed by Albeverio and Karwowski in \cite{AK}. Their construction was extended to all local fields by Yasuda in \cite{Ytokyo}.

Let $K_\fp=K_v$ be a $\fp$-adic field with the $\fp$-adic valuation $|\cdot|_\fp=|\cdot|_v.$ Let $a(M)=a_\fp(M)=a_v(M),$ $M\in\Z,$ be a sequence of real number satisfying
\begin{equation}\label{s1}
a(M+1)\leq a(M)
\end{equation}
and
\begin{equation}\label{s2}
\lim_{M\to+\infty}a(M)=0.
\end{equation}
It is shown in \cite{Ytokyo} that there is one-to-one correspondence between sequences satisfying \eqref{s1} and \eqref{s2} and rotation-invariant additive Markov processes $X_t$ whose L\'{e}vy measure $\nu$ is given by $\nu(B(0,q^M)^c)=a(M),$ where $B(x,q^M)$ denotes the ball  with radius $q^M$ centered at $x,$ i.e., $\{z\in K_\fp:|x-z|_v\leq q^M\}.$
Moreover, the transition probabilities kernels $P_t^{(\fp)}=P_t^{(v)}$ of the Markov process $X_t=X_t^{(\fp)}=X_t^{(v)}$ on $K_\fp$ are given by
\begin{multline}\label{tk}
P_t^{(\fp)}(x,B(y,q^M))=\\
\begin{cases}
P_M^{(\fp)}(t)&\text{if $|x-y|_v\leq q^M,$}\\
(q-1)^{-1}q^{1-m}(P_{M+m}^{(\fp)}(t)-P_{M+m-1}^{(\fp)}(t))&\text{if $|x-y|_v=q^{M+m},$ $m\geq 1,$}
\end{cases}
\end{multline}
where
\begin{equation*}
P_M^{(\fp)}(t)=q^{-1}(q-1)\sum_{i=0}^\infty q^{-i}\exp\left(-(q-1)^{-1}(qa(M+i)-a(M+i+1))t\right)
\end{equation*}
and $a(M)$ satisfies \eqref{s1} and \eqref{s2} (the dependence on $\fp$ of the above function is through $q=q_\fp$ and $a=a_\fp$).

The following lemma follows easily from standard theory of continuous time Markov chains together with explicit formula for $P_t^{(\fp)}.$
\begin{lemma}[{\cite[Lemma~2.1]{Ytokyo}}]\label{exittime}
Let $X_t$ be the Markov process on $K_\fp$ starting from $0$ with the transition kernel $P_t$ defined in \eqref{tk}, and let, for $m\in\Z,$
\begin{equation*}
\tau^{(m)}=\inf\{t>0:|X_t|_v>q_v^m\}.
\end{equation*}
Then
\begin{equation*}
\p(\tau^{(m)}>t)=e^{-a(m)t}.
\end{equation*}
\end{lemma}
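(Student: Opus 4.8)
The plan is to identify $\tau^{(m)}$ as the first time the process leaves the ball $B(0,q^m)=\{z\in K_\fp:|z|_v\leq q^m\}$ and to show that this exit happens at a constant instantaneous rate $a(m)$, so that $\tau^{(m)}$ is exponentially distributed with that parameter.

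First I would exploit the ultrametric and additive structure of $X_t$. At a jump the increment $\xi=\Delta X_t$ is added to the current position, and the strong triangle inequality gives, for any $x$ with $|x|_v\leq q^m$, that $|x+\xi|_v=|\xi|_v>q^m$ whenever $|\xi|_v>q^m$, whereas $|x+\xi|_v\leq q^m$ whenever $|\xi|_v\leq q^m$. Hence, starting from $0$, the process leaves $B(0,q^m)$ \emph{precisely} at the first jump whose increment satisfies $|\xi|_v>q^m$, and every such jump leaves the ball immediately, irrespective of the current position inside it.

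Next I would compute the rate of these exit jumps. The large jumps, those with $|\xi|_v>q^m$, are governed by the restriction of the L\'evy measure $\nu$ to $B(0,q^m)^c$, whose total mass is $\nu(B(0,q^m)^c)=a(m)<+\infty$ by hypothesis; thus the exit jumps form a Poisson process of constant rate $a(m)$, independent of the interior motion. I would make the link with the explicit kernel by checking, directly from the series for $P_M^{(\fp)}(t)$, that $\frac{d}{dt}\big|_{t=0}P_t^{(\fp)}(0,B(0,q^m))=(P_m^{(\fp)})'(0)=-a(m)$; the telescoping sum $\sum_{i\geq 0}q^{-i}(qa(m+i)-a(m+i+1))=qa(m)$ yields this at once. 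By translation invariance of the additive process, the same instantaneous exit rate $a(m)$ holds from every point of the ball, confirming the rate read off from the Poisson description.

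Finally I would conclude that $u(t):=\p(\tau^{(m)}>t)$ is the probability that the rate-$a(m)$ Poisson process of exit jumps has no arrival in $[0,t]$, which equals $e^{-a(m)t}$; equivalently one derives $u'(t)=-a(m)u(t)$ with $u(0)=1$ from a first-jump renewal decomposition and solves. The main obstacle is technical rather than conceptual: because conditions \eqref{s1} and \eqref{s2} permit $a(M)\to+\infty$ as $M\to-\infty$, the process has infinitely many small jumps in finite time and is not a finite-rate chain, so one must rigorously separate the finite-rate exit jumps from the infinitely active interior motion (a L\'evy--It\^o type decomposition), justified here by the ultrametric fact that only large jumps can cause an exit and that every large jump causes one.
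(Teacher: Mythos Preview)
Your argument is correct. The paper does not actually give a proof of this lemma; it merely quotes it from \cite{Ytokyo} and remarks that it ``follows easily from standard theory of continuous time Markov chains together with explicit formula for $P_t^{(\fp)}$.'' The intended one-line argument is presumably to lump the state space into the balls of radius $q^m$, observe that this yields a continuous-time Markov chain, and then use the elementary fact that the holding time in a state is exponential with parameter equal to the negative of the diagonal entry of the generator, which here is $a(m)$ (this is exactly what Lemma~\ref{genkp} gives with $M=m$ and $x\in B$).

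Your route is phrased instead in L\'evy-process language: you isolate the large jumps via the ultrametric inequality, recognize them as a Poisson process of rate $\nu(B(0,q^m)^c)=a(m)$, and read off the exponential law of the first arrival. This is entirely equivalent to the Markov-chain argument---your derivative computation $(P_m^{(\fp)})'(0)=-a(m)$ is precisely the diagonal generator entry---but your framing has the advantage of making transparent \emph{why} the exit rate is constant (the ultrametric guarantees that any large jump exits and no small jump does, regardless of the current position in the ball), and of handling the infinite-activity issue cleanly by separating large from small jumps. The paper's Markov-chain phrasing is shorter but leaves that point implicit.
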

If $K=\Q$ then the prime ideals $\fp$ of $R_K=\Z$ are of the form $(p)=\{pk:k\in\Z\}.$ Then $K_\fp=\Q_p$ is the field of $p$-adic numbers, $R_\fp=\Z_p$ is the ring of $p$-adic integers. The generator $H$ of the symmetric Markov semigroup $P_t^{(p)}$ (of the process $X_t=X_t^{(p)}$ on the state space $\Q_p$) was computed in \cite[p. 15]{AK}. The computation in the general case of $K_\fp$ for algebraic number field $K$ bears no difference -- only some cosmetic changes in the notation are required -- and we get the following.
\begin{lemma}\label{genkp}
Let $P_t^{(\fp)}$ be as in \eqref{tk}, and let
\begin{equation*}
H\one_{B}(x)=\lim_{t\searrow 0}t^{-1}\left(P_t^{(\fp)}\one_{B}(x)-\one_{B}(x)\right),
\end{equation*}
where $B=B(y,q^M),$ and $\one_B$ is the indicator function of $B.$
Then,
\begin{multline*}
H\one_{B}(x)\\=
\begin{cases}
-a_v(M)&\text{if $x\in B,$}\\
q^{1-m}(q-1)^{-1}\left(a_v(M+m-1)-a_v(M+m)\right)&\text{if $d_v(x,B)=q^{M+m}.$}
\end{cases}
\end{multline*}
\end{lemma}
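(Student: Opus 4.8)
The plan is to reduce the whole lemma to a single computation, namely the right-hand derivative at $t=0$ of the function $t\mapsto P_N^{(\fp)}(t)$. First I would observe that since $B=B(y,q^M)$ is a ball, the semigroup applied to its indicator is nothing but the transition kernel evaluated at $B$, that is, $P_t^{(\fp)}\one_B(x)=P_t^{(\fp)}(x,B(y,q^M))$. Thus the two cases in the formula \eqref{tk} correspond exactly to the two cases of the lemma, and I only need to differentiate the explicit expressions appearing there.

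Next I would verify that $P_N^{(\fp)}(0)=1$ for every index $N$: setting $t=0$ collapses each exponential to $1$, leaving the geometric series $q^{-1}(q-1)\sum_{i\geq 0}q^{-i}=q^{-1}(q-1)\cdot q/(q-1)=1$. This confirms that in the first case ($x\in B$, so $\one_B(x)=1$) the subtracted term is cancelled by $P_M^{(\fp)}(0)$, while in the second case ($\one_B(x)=0$) one has $P_{M+m}^{(\fp)}(0)-P_{M+m-1}^{(\fp)}(0)=0$; in both cases the difference quotient $t^{-1}(P_t^{(\fp)}\one_B(x)-\one_B(x))$ is a genuine $0/0$ limit, i.e.\ a derivative at $t=0^+$.

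The heart of the argument is then the identity
\[
\frac{d}{dt}P_N^{(\fp)}(t)\Big|_{t=0^+}=-a_v(N).
\]
Differentiating the series term by term brings down the factor $-(q-1)^{-1}(qa_v(N+i)-a_v(N+i+1))$ from each exponent; multiplying by $q^{-1}(q-1)q^{-i}$ yields the typical term $-q^{-i}a_v(N+i)+q^{-i-1}a_v(N+i+1)$. Reindexing the second family by $j=i+1$ produces a telescoping cancellation that leaves only the $i=0$ contribution $-a_v(N)$. Substituting this derivative into the first case gives $-a_v(M)$ at once, and into the second case gives $(q-1)^{-1}q^{1-m}(a_v(M+m-1)-a_v(M+m))$ once the two derivatives $-a_v(M+m)$ and $-a_v(M+m-1)$ are combined, which is precisely the claimed value.

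The one genuinely analytic point, and the step I expect to require the most care, is justifying the term-by-term differentiation of the infinite series. Here I would invoke the hypotheses on $a_v$: by the monotonicity \eqref{s1} the exponents $\lambda_i:=(q-1)^{-1}(qa_v(N+i)-a_v(N+i+1))$ are nonnegative and bounded above by $\tfrac{q}{q-1}a_v(N)$, so both the series and its formal derivative series are dominated termwise by fixed constant multiples of $q^{-i}$, uniformly in $t\geq 0$. This uniform convergence legitimizes interchanging $\tfrac{d}{dt}$ with the summation, and the limit \eqref{s2} guarantees that the rearranged sums are absolutely convergent, so the telescoping manipulation above is valid.
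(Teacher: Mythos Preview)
Your argument is correct: the identification $P_t^{(\fp)}\one_B(x)=P_t^{(\fp)}(x,B)$ reduces everything to computing $\tfrac{d}{dt}P_N^{(\fp)}(t)\big|_{t=0^+}$, and your telescoping calculation together with the uniform bound $0\le\lambda_i\le\tfrac{q}{q-1}a_v(N)$ (which follows from \eqref{s1}, \eqref{s2}, and the nonnegativity of $a_v$ implied by them) cleanly justifies the term-by-term differentiation and yields the stated values.

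The paper itself does not give a proof here: it simply refers to the computation in \cite[p.~15]{AK} for the case $K=\Q$ and remarks that only notational changes are needed for general $K_\fp$. Your write-up supplies exactly that computation in the general setting, so it is not a different route but rather a self-contained version of what the paper outsources to the literature.
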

\subsection{Markov process on $K_\mathbb A$}\label{mpoa}
Now we are using the stochastic processes $X_t=X_t^{(\fp)}$ on $K_\fp$ constructed in the previous section to induce in a componentwise way a stochastic process on $K_\mathbb A.$ We follow \cite{Karwowski_adeles} where the construction for $\Q_\mathbb A,$ the adele ring of $\Q$ was presented (in fact there is only restricted direct product of $\prod_{p}\Q_p,$ $p$ prime number, considered in \cite{Karwowski_adeles}, not the whole adele ring).

For $v\in\mathcal P_{\infty}(K),$ we take any Markov process $X^{(v)}_t$ on $K_v$ with transition kernel $P_t^{(v)}.$
Now, for $x=(x_v)_{v\in\mathcal P(K)}$ and $B$ being the product of balls, of not necessarily equal radii, in $K_v,$ $v\in\mathcal P(K),$ define
\begin{equation*}
P_t(x,B)=\prod_{v\in\mathcal{P}_\infty(K)}P_t^{(v)}(x_v,B_v)\prod_{\fp\subset R_K}P_t^{(\fp)}(x_\fp,B_\fp).
\end{equation*}
\begin{proposition}\label{markovonadels}
Suppose that the sequences $\{a_v(M)\}_{M\in\Z},$ $v\in\mathcal P(K)$ (satisfying \eqref{s1} and \eqref{s2}) satisfy additionally
\begin{equation}\label{s3}
0<\sum_{v\in\mathcal P(K)}a_v(0)<+\infty.
\end{equation}
Then the kernel $P_t$ defined above obeys the Chapman-Kolmogorov equation. Furthermore,
for every $x\in K_\mathbb A$ and all $t\geq 0,$
\begin{equation}\label{conserv}
P_t(x,K_\mathbb A)=1.
\end{equation}
Hence, we have a Markov process $X_t^{K_\mathbb A}$ on $K_\mathbb A$ with transition kernel $P_t.$
\end{proposition}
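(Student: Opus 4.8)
The plan is to realize $P_t(x,\cdot)$ as the product measure $\mu_x^t:=\bigotimes_{v\in\mathcal P(K)}P_t^{(v)}(x_v,\cdot)$ on the full Cartesian product $\prod_v K_v$ and then to verify the two assertions separately: that this product measure is carried by the restricted product $K_\mathbb A$ (which is exactly \eqref{conserv}), and that the family $(P_t)_{t\ge 0}$ satisfies the Chapman--Kolmogorov equation. Since each $P_t^{(v)}$ is a genuine (conservative) transition kernel on $K_v$, the measure $\mu_x^t$ exists by the standard Kolmogorov/Ionescu--Tulcea construction, and the whole statement reduces to controlling how many coordinates can leave $R_v$ by the fixed time $t$.

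For conservativeness I would argue by Borel--Cantelli. Fix $x=(x_v)\in K_\mathbb A$; by definition of the restricted product $x_\fp\in R_\fp$ for all but finitely many finite places $\fp$. For such a place the coordinate process $X_t^{(\fp)}$ is additive and $R_\fp=B(0,q_\fp^0)$ is an additive subgroup, so translation invariance gives $\p_{x_\fp}(X_t^{(\fp)}\notin R_\fp)=\p_0(X_t^{(\fp)}\notin R_\fp)$. Since a path starting at $0\in R_\fp$ and lying outside $R_\fp$ at time $t$ must already have exited, $\{X_t^{(\fp)}\notin R_\fp\}\subseteq\{\tau^{(0)}\le t\}$ with $\tau^{(0)}=\inf\{t>0:|X_t^{(\fp)}|_\fp>q_\fp^0\}$, and Lemma~\ref{exittime} yields
\begin{equation*}
\p_{x_\fp}(X_t^{(\fp)}\notin R_\fp)\le 1-e^{-a_\fp(0)t}\le a_\fp(0)\,t .
\end{equation*}
Summing over all places and invoking \eqref{s3} (the finitely many infinite places and the finitely many $\fp$ with $x_\fp\notin R_\fp$ contribute only a finite amount) gives $\sum_{v}\mu_x^t(\{y:y_v\notin R_v\})<\infty$. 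The (first) Borel--Cantelli lemma then shows that $\mu_x^t$-almost every $y$ satisfies $y_v\in R_v$ for all but finitely many $v$; that is $\mu_x^t(K_\mathbb A)=1$, which is precisely \eqref{conserv}.

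For the Chapman--Kolmogorov equation it suffices to test against the generating sets $B=\prod_v B_v$ with each $B_v$ a ball and $B_v=R_v$ for all but finitely many $v$. On such $B$ we have $P_t(x,B)=\prod_v P_t^{(v)}(x_v,B_v)$, and this infinite product converges to a well-defined number since $1-P_t^{(v)}(x_v,R_v)\le a_v(0)t$ for the cofinitely many coordinates with $B_v=R_v$, again by \eqref{s3}. Because $\mu_x^s$ is a product measure and the integrand $y\mapsto P_t(y,B)=\prod_v P_t^{(v)}(y_v,B_v)$ factorizes over coordinates, Tonelli's theorem lets me pass the integral through the product:
\begin{equation*}
\int_{K_\mathbb A}P_s(x,dy)\,P_t(y,B)=\prod_v\int_{K_v}P_s^{(v)}(x_v,dy_v)\,P_t^{(v)}(y_v,B_v)=\prod_v P_{s+t}^{(v)}(x_v,B_v)=P_{s+t}(x,B),
\end{equation*}
the middle step being the coordinatewise Chapman--Kolmogorov equation for each $P_t^{(v)}$. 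Extending this identity from the generating algebra to the Borel $\sigma$-algebra of $K_\mathbb A$ is routine, and once $(P_t)_{t\ge 0}$ is a conservative transition semigroup on the locally compact, second-countable space $K_\mathbb A$, the existence of the Markov process $X_t^{K_\mathbb A}$ with independent coordinates follows from the usual Kolmogorov construction.

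The main obstacle is the conservativeness step: for arbitrary coordinate processes the full product measure $\bigotimes_v P_t^{(v)}(x_v,\cdot)$ need not live on the restricted product $K_\mathbb A$, and it is exactly the summability hypothesis \eqref{s3}, fed into the exit-time estimate of Lemma~\ref{exittime} and Borel--Cantelli, that forces all but finitely many coordinates back into $R_v$ at a fixed time. Everything else---the factorization of Chapman--Kolmogorov and the convergence of the defining infinite product---is governed by the same estimate.
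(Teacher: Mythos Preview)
Your proof is correct and follows the same architecture as the paper: dismiss Chapman--Kolmogorov as routine via the product structure, and prove conservativeness by Borel--Cantelli once $\sum_{v}P_t^{(v)}(x_v,R_v^c)<\infty$ is established. The one genuine difference is how that summand is bounded. The paper plugs in the explicit formula \eqref{tk} for $P_0^{(v)}(t)$, applies $1-e^{-x}\le x$ termwise, and telescopes the resulting sum to $a_v(0)$; you instead use the inclusion $\{X_t^{(v)}\notin R_v\}\subset\{\tau^{(0)}\le t\}$ together with Lemma~\ref{exittime} to get $P_t^{(v)}(x_v,R_v^c)\le 1-e^{-a_v(0)t}\le a_v(0)t$ directly. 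Your route is shorter and avoids the explicit transition-kernel series, at the cost of invoking Lemma~\ref{exittime} (which the paper only uses later); the paper's computation is more self-contained but heavier. Both arrive at the same bound and the same conclusion.
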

\begin{proof}
To check the Chapman-Kolmogorov equation is an easy exercise (it goes as in \cite[p. 4649]{Karwowski_adeles}).
So we prove \eqref{conserv}. It follows from Borel-Cantelli lemma that if
\begin{equation}\label{1}
\sum_{v\in\mathcal{P}_\mathrm{f}(K)}P_t^{(v)}(x_v,R_v^c)<+\infty
\end{equation}
then
\begin{equation*}
P_t(x,K_\mathbb A^c)=\p(\text{for infinitely many $v,$ $|X_t^{(v)}|_v>1$})=0.
\end{equation*}
Hence, we need to show \eqref{1}. Since $x_v\in R_v$ for all but finitely many $v\in\mathcal{P}_\mathrm{f}(K)$ we can assume that all $x_v$ in \eqref{1} are in $R_v.$ Thus, using \eqref{tk} we have,
\begin{equation*}
\sum_{v\in\mathcal{P}_\mathrm{f}(K)}P_t^{(v)}(x_v,R_v^c)=\sum_{v\in\mathcal{P}_\mathrm{f}(K)}\left(1-P_t^{(v)}(x_v,R_v)\right)
=\sum_{v\in\mathcal{P}_\mathrm{f}(K)}\left(1-P_0^{(v)}(t)\right).
\end{equation*}
The right hand side above is equal to
\begin{equation*}
%\sum_{v\in\mathcal{P}_\mathrm{f}(K)}\left(1-q^{-1}_v(q_v-1)\sum_{i=0}^\infty %q_v^{-i}\exp\left(-(q_v-1)^{-1}(q_va(i)-a(i+1))t\right)\right)=\\
\sum_{v\in\mathcal P_\mathrm{f}}
%_{v\in\mathcal{P}_\mathrm{f}(K)}
\left(q^{-1}_v(q_v-1)\sum_{i=0}^\infty q_v^{-i}\left(1-\exp\left(-(q_v-1)^{-1}(q_va_v(i)-a_v(i+1))t\right)\right)\right).
\end{equation*}
Since for $x\geq 0,$ $1-e^{-x}\leq x,$ we can estimate the above sum by
\begin{multline*}
t\sum_{v\in\mathcal{P}_\mathrm{f}}\left(\sum_{i=0}^\infty q_v^{-i}(a_v(i)-q^{-1}_va_v(i+1))\right)
=t\sum_{v\in\mathcal{P}_\mathrm{f}}\left(a_v(0)+\lim_{i\to+\infty}\frac{a_v(i)}{q_v^i}\right).
\end{multline*}
By \eqref{s2} the right side is equal to
$t\sum_{v\in\mathcal{P}_\mathrm{f}(K)}a_v(0),$
and is finite by \eqref{s3}.
\end{proof}
\begin{note}
In order to prove Theorem~\ref{main} it is not necessary to consider the whole adele ring. Note that we do not use coordinates $K_v$ with $v$ belonging to the set $\mathcal{P}_\infty(K)$ of infinite places of $K$ in the formulation of Theorem~\ref{main}.
\end{note}
\section{Proof of Theorem~\ref{main}}\label{pmain}
Let $X_t^{K_\mathbb A}=(X_t^{(v)})_{v\in\mathcal{P}(K)}$ be the Markov process on $K_\mathbb A$ with transition kernel $P_t$ constructed in \S\ref{mpoa}. Let, for $v\in\mathcal{P}_\mathrm{f}(K),$
\begin{equation*}
\tau_v=\inf\{t\geq 0:X_t^{(v)}\not\in R_v\}.
\end{equation*}
By Lemma~\ref{exittime},
\begin{equation}\label{time}
\p_0(\tau_v>t)=e^{-a_v(0)t}.
\end{equation}
\begin{lemma}\label{mainlemma}
Let the process $X_t^{(v)}$ be defined by the sequence satisfying \eqref{s1}, \eqref{s2}, and moreover,
\begin{equation}\label{exponential}
a_v(M)=c_vq^{-\alpha_vM}
\end{equation}
for some $c_v,\alpha_v>0.$
Then
\begin{equation*}
\p_0(|X_{\tau_v}^{(v)}|_v=q^m_v)=(q_v^{\alpha_v}-1)q_v^{-m\alpha_v}.
\end{equation*}
\end{lemma}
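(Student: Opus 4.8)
The plan is to exploit the fact that $X_t^{(v)}$ is a pure-jump additive Markov process whose jump mechanism is governed by the L\'evy measure $\nu$ with $\nu(B(0,q_v^M)^c)=a_v(M)$, together with the ultrametric geometry of $K_\fp$. Since $R_v=\{|x|_v\le 1\}=B(0,q_v^0)$ and the process starts at $0$, the exit time $\tau_v$ is exactly the $\tau^{(0)}$ of Lemma~\ref{exittime}, so by \eqref{time} it is exponential with parameter $a_v(0)$. The process leaves $R_v$ by a single jump: whenever $X_{t-}$ satisfies $|X_{t-}|_v\le 1$ and a jump $\xi$ has $|\xi|_v\le 1$, ultrametricity gives $|X_{t-}+\xi|_v\le\max\{|X_{t-}|_v,|\xi|_v\}\le 1$, so the process stays in $R_v$; the exit occurs at the first jump with $|\xi|_v>1$. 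For that exit jump, since $|X_{\tau_v-}|_v\le 1<|\xi|_v$, the strict ultrametric inequality forces $|X_{\tau_v}^{(v)}|_v=|\xi|_v$. Hence the law of $|X_{\tau_v}^{(v)}|_v$ is precisely the law of the valuation of the first ``large'' jump.

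First I would identify the relevant jump intensities. The rate at which the process leaves $R_v$ is $a_v(0)$, while the rate of jumping into the shell $\{x:|x|_v=q_v^m\}=B(0,q_v^m)\setminus B(0,q_v^{m-1})$ is
\[
\nu\bigl(\{|x|_v=q_v^m\}\bigr)=\nu\bigl(B(0,q_v^{m-1})^c\bigr)-\nu\bigl(B(0,q_v^m)^c\bigr)=a_v(m-1)-a_v(m),
\]
for $m\ge 1$. (The same numbers can be read off the generator of Lemma~\ref{genkp}: this shell is a union of $q_v-1$ balls of radius $q_v^{m-1}$, each entered from $0$ at rate $(q_v-1)^{-1}(a_v(m-1)-a_v(m))$.) By the elementary competition of the independent Poisson streams of jumps of prescribed valuation, conditionally on leaving $R_v$ the first large jump lands in the shell of radius $q_v^m$ with probability equal to the ratio of rates, so
\[
\p_0\bigl(|X_{\tau_v}^{(v)}|_v=q_v^m\bigr)=\frac{a_v(m-1)-a_v(m)}{a_v(0)}.
\]
Here $\sum_{m\ge 1}(a_v(m-1)-a_v(m))=a_v(0)$ by \eqref{s2}, which both certifies that this is a probability distribution and matches the total exit rate.

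Finally I would substitute the exponential form $a_v(M)=c_vq_v^{-\alpha_v M}$. Then $a_v(m-1)-a_v(m)=c_vq_v^{-\alpha_v m}(q_v^{\alpha_v}-1)$ and $a_v(0)=c_v$, so the ratio collapses to $(q_v^{\alpha_v}-1)q_v^{-m\alpha_v}$, the claimed formula; a geometric sum confirms $\sum_{m\ge 1}(q_v^{\alpha_v}-1)q_v^{-m\alpha_v}=1$.

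The step that needs the most care is the justification that the exit is triggered by the first large jump and that the landing shell follows the rate-ratio law. I expect to make this rigorous either by passing to the embedded jump chain of the continuous-time Markov chain on the tree of balls, whose transition rates are exactly those of Lemma~\ref{genkp}, or, avoiding the jump picture entirely, via the exit-time identity $\{|X_{\tau_v}^{(v)}|_v>q_v^{m-1}\}=\{\tau^{(0)}=\tau^{(m-1)}\}$ for the nested balls together with $\p_0(\tau^{(0)}=\tau^{(m-1)})=a_v(m-1)/a_v(0)$, obtained from Lemma~\ref{exittime} by comparing the exponential holding times; taking the difference in $m$ then recovers the shell probability.
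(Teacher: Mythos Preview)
Your argument is correct and arrives at the same intermediate identity as the paper, namely
\[
\p_0\bigl(|X_{\tau_v}^{(v)}|_v=q_v^m\bigr)=\frac{a_v(m-1)-a_v(m)}{a_v(0)},
\]
after which the substitution \eqref{exponential} is identical. The routes to this identity differ. You work directly with the L\'evy measure: ultrametricity forces the exit from $R_v$ to be the first jump of size $>1$, and competing Poisson streams give the shell probability as a ratio of rates. The paper instead reduces to a single ball $B$ at distance $q_v^m$ via the ball count $(q_v-1)q_v^{m-1}$, introduces the embedded continuous-time chain on the set of unit balls, and reads off $\p_0(X_{\tau_v}^{(v)}\in B)$ by identifying the off-diagonal entries of its generator with the formula of Lemma~\ref{genkp}. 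Your approach is shorter and more conceptual; the paper's approach ties the computation explicitly to the transition-kernel machinery already developed, at the cost of the extra combinatorial and generator steps. One small caveat on your alternative route: the equality $\p_0(\tau^{(0)}=\tau^{(m-1)})=a_v(m-1)/a_v(0)$ does not follow from the \emph{marginal} exponential laws of Lemma~\ref{exittime} alone; it still relies on the jump description (the event is ``the first jump of size $>1$ already has size $>q_v^{m-1}$''), so this second route is not genuinely independent of the first.
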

\begin{proof}
It follows from \eqref{expansion} that for every $y\in K_\fp,$ $M\in\Z$ and $m\geq 1$ there are $(q_v-1)q_v^{m-1}$ disjoint balls $B(x,q_v^M)=\{z\in K_\fp:|x-z|_v\leq q_v^M\}$ such that $|x-y|_v=q_v^{M+m}.$ Hence, taking $M=0,$ it follows that
\begin{equation}\label{wzor1}
\p_0(|X_{\tau_v}^{(v)}|_v=q^m_v)=(q_v-1)q_v^{m-1}\p_0(X_{\tau_v}^{(v)}\in B),
\end{equation}
where $B$ is a ball of radius $1$ such that the distance\footnote{The distance between two balls $B_1$ and $B_2$ is defined as the
distance between arbitrary $x$ and $y$ such that $x\in B_1$ and $y\in B_2,$ which is $d_v(x,y)=v(x-y).$ This is well defined due to the fact that we are working in ultrametric spaces.} $d_v(R_v,B)=q_v^m.$
Hence we need to compute $\p_0(X_{\tau_v}^{(v)}\in B)$ for $d_v(R_v,B)=q_v^m.$

Let $B_0,B_1,\ldots$ be the sequence of all disjoint balls of radii $1$ in $K_\fp,$ with $B_0=B(0,1)=R_v.$ We associate with the process $X_t^{(v)}$ a continuous time Markov chain $M_t$ on the state space $\N_0=\{0,1,\ldots\}$ defined by the relation
\begin{equation}\label{relacja}
M_t=\ell\text{ if and only if }X_t^{(v)}\in B_\ell.
\end{equation}
%Let $P_{ij}(t)$ be transition kernel for $M_t.$ Clearly, if $x\in B_0=R_v,$ $B(y,1)\not=B_0$ (hence, $B(y,1)=B_j$ for some unique %$j$) and $d_v(R_v,B(y,1))=q^m,$ then
%\begin{equation}
%P_t^{(\fp)}(x,B(y,1))=P_{0j}(t)
%\end{equation}
Let $L$ be its generator. Then, for $f\in\mathcal{D}(L),$ the domain of $L,$ by \eqref{time},
\begin{equation*}
Lf(\ell)=a_v(0)\sum_{j\in\N_0}(f(j)-f(\ell))\p(X_{\tau_v}^{(v)}\in B_j\mid X_0^{(v)}\in B_\ell).
\end{equation*}
For $f=\delta_i,$ we get
\begin{multline*}
L\delta_i(\ell)=a_v(0)(1-\delta_i(\ell))\p(X_{\tau_v}^{(v)}\in B_i\mid X_0^{(v)}\in B_\ell)\\
-a_v(0)\delta_i(\ell)\sum_{j\not=i}\p(X_{\tau_v}^{(v)}\in B_j\mid X_0^{(v)}\in B_\ell).
\end{multline*}
Therefore, if $i=\ell,$
\begin{equation*}
L\delta_\ell(\ell)=-a_v(0)\sum_{j\not=\ell}\p(X_{\tau_v}^{(v)}\in B_j\mid X_0^{(v)}\in B_\ell)=-a_v(0).
\end{equation*}
And generally,
\begin{equation}\label{Ldelta}
L\delta_i(\ell)
=\begin{cases}
-a_v(0)&\text{ if $i=\ell,$}\\
a_v(0)\p(X_{\tau_v}^{(v)}\in B_i\mid X_0^{(v)}\in B_\ell)&\text{ if $i\not=\ell.$}
\end{cases}
\end{equation}
By \eqref{Ldelta} with $\ell=0,$ \eqref{relacja} Lemma~\ref{genkp} and  it follows that if $B_i\not=B_0=R_v$ and $d_v(R_v,B_i)=q^m,$ then
\begin{equation*}
\p_0(X_{\tau_v}^{(v)}\in B_i)=a_v(0)^{-1}q_v^{1-m}(q_v-1)^{-1}\left(a_v(m-1)-a_v(m)\right)
\end{equation*}
Using \eqref{exponential}
the above probability is equal to $q_v^{1-m(\alpha_v+1)}(q_v-1)^{-1}(q_v^{\alpha_v}-1).$
This together with \eqref{wzor1} finish the proof.
\end{proof}
\begin{proof}[Proof of Theorem~\ref{main}]
For $v\in\mathcal{P}_{\mathrm{f}}(K),$ we write
\begin{multline*}
\e_0\left((1-q_v^{-\alpha_v})^{-1}|\pi_vX_{\tau_v}^{(v)}|_v^{-s+\alpha_v}\right)\\
=\sum_{m=1}^\infty\e_0\left((1-q_v^{-\alpha_v})^{-1}|\pi_vX_{\tau_v}^{(v)}|_v^{-s+\alpha_v}\one_{|X_{\tau_v}^{(v)}|_v=q_v^m}\right)\\
=(1-q_v^{-\alpha_v})^{-1}q_v^{s-\alpha_v}\sum_{m=1}^\infty q_v^{m(-s+\alpha_v)}\p_0\left(|X_{\tau_v}^{(v)}|_v=q_v^m\right).
\end{multline*}
By Lemma~\ref{mainlemma}, we get
\begin{multline*}
\e_0\left((1-q_v^{-\alpha_v})^{-1}|\pi_vX_{\tau_v}^{(v)}|_v^{-s+\alpha_v}\right)\\
=(1-q_v^{-\alpha_v})^{-1}q_v^{s-\alpha_v}\sum_{m=1}^\infty q_v^{-ms}(q_v^{\alpha_v}-1)=q_v^s\sum_{m=1}^\infty q_v^{-ms}=(1-q_v^{-s})^{-1}.
\end{multline*}
Since the coordinates $X_t^{(v)},$ $v\in\mathcal{P}_{\mathrm{f}}(K),$ are independent the result follows.
\end{proof}
%\subsection*{Acknowledgements}


\begin{thebibliography}{11}

\bibitem{AK}
S.~Albeverio and W.~Karwowski.
\newblock A random walk on $p$-adics -- the generator and its spectrum.
\newblock {\em Stochastic Process. Appl.} 53(1):1--22, 1994.

\bibitem{Karwowski_adeles}
W.~Karwowski and R.V.~Mendes.
\newblock Hierarchical structures and asymmetric stochastic processes on $p$-adics and adeles.
\newblock {\em J. Math. Phys.} 35(9):4637--4650, 1994.

\bibitem{K}
A.~Khrennikov.
\newblock {\em $p$-adic valued distributions in mathematical physics.} Mathematics and its Applications, 309.
\newblock Kluwer Academic Publishers Group, Dordrecht, 1994.

\bibitem{K1}
A.~Khrennikov.
\newblock {Non-Archimedean analysis: quantum paradoxes, dynamical systems and biological models.} Mathematics and its Applications, 427.
\newblock Kluwer Academic Publishers, Dordrecht, 1997.

\bibitem{Nar}
W.~Narkiewicz.
\newblock {\em Elementary and analytic theory of algebraic numbers.} Third edition.
\newblock Springer Monographs in Mathematics. Springer-Verlag, Berlin, 2004.

\bibitem{Neu}
J.~Neukirch. {\em Algebraic number theory.} Translated from the 1992 German original and with a note by Norbert Schappacher. With a foreword by G. Harder.
\newblock Grundlehren der Mathematischen Wissenschaften [Fundamental Principles of Mathematical Sciences], 322. Springer-Verlag, Berlin, 1999.

\bibitem{RV}
D.~Ramakrishnan and R.J.~Valenza.
\newblock {\em Fourier analysis on number fields.}
\newblock Graduate Texts in Mathematics, 186. Springer-Verlag, New York, 1999.

\bibitem{W}
A.~Weil.
\newblock {\em Basic number theory.} Third edition.
\newblock Die Grundlehren der Mathematischen Wissenschaften, Band 144. Springer-Verlag, New York-Berlin, 1974.

\bibitem{Ytokyo}
K.~Yasuda.
\newblock Additive processes on local fields.
\newblock {\em J. Math. Sci. Univ. Tokyo} 3(3):629--654, 1996.

\bibitem{Ytohoku}
K.~Yasuda.
\newblock Semi-stable processes on local fields.
\newblock {\em Tohoku Math. J. (2),} 58(3):419--431, 2006.

\bibitem{Yprob}
K.~Yasuda.
\newblock Markov processes on the adeles and representations of Euler products.
\newblock{\em J. Theor. Probab.} 23(3):748--769, 2010.


\end{thebibliography}
\end{document}